\documentclass[a4paper, 12pt]{article}

\usepackage[sort&compress]{natbib}
\bibpunct{(}{)}{;}{a}{}{,} 

\usepackage{amsthm, amsmath, amssymb, mathrsfs, multirow, url, subfigure}
\usepackage{graphicx} 
\usepackage{ifthen} 
\usepackage{amsfonts}
\usepackage[usenames]{color}
\usepackage{fullpage}

%\RequirePackage[OT1]{fontenc} 
%\RequirePackage[colorlinks]{hyperref}
%\RequirePackage{hypernat}

%\numberwithin{equation}{section} 

\theoremstyle{plain}

\newtheorem{prop}{Proposition}

\theoremstyle{definition}

\theoremstyle{remark} 
\newtheorem{remark}{Remark}
\newtheorem*{astep}{A-step}
\newtheorem*{pstep}{P-step}
\newtheorem*{cstep}{C-step}

\newcommand{\prob}{\mathsf{P}}

\newcommand{\lpi}{\underline\Pi} 
\newcommand{\upi}{\overline\Pi} 

\newcommand{\lPi}{\underline\Pi} 
\newcommand{\uPi}{\overline\Pi} 

\newcommand{\bel}{\underline\Pi} 
\newcommand{\pl}{\overline\Pi}

\newcommand{\unif}{{\sf Unif}}
\newcommand{\nm}{{\sf N}}
\newcommand{\expo}{{\sf Exp}}

\newcommand{\chisq}{{\sf Chisq}}

\newcommand{\RR}{\mathbb{R}}

\newcommand{\UU}{\mathbb{U}}

\newcommand{\YY}{\mathbb{Y}} 

\newcommand{\C}{\mathscr{C}}

\renewcommand{\S}{\mathcal{S}}

\newcommand{\sign}{\mathrm{sign}}

\newcommand{\stgeq}{\geq_{\text{st}}}

\title{Inferential models and possibility measures\footnote{This manuscript has been prepared for inclusion in the forthcoming {\em Handbook on Bayesian, Fiducial, and Frequentist (BFF) Inferences}, edited by J.~Berger, X.-L.~Meng, N.~Reid, and M.~Xie.}}
\author{
Chuanhai Liu\footnote{Department of Statistics, Purdue University, {\tt chuanhai@purdue.edu}} \quad and \quad Ryan Martin\footnote{Department of Statistics, North Carolina State University, {\tt rgmarti3@ncsu.edu}}
}
\date{\today}

\begin{document}

\maketitle 

\begin{abstract}
The inferential model (IM) framework produces data-dependent, non-additive degrees of belief about the unknown parameter that are provably valid.  The validity property guarantees, among other things, that inference procedures derived from the IM control frequentist error rates at the nominal level.  A technical complication is that IMs are built on a relatively unfamiliar theory of random sets.  Here we develop an alternative---and practically equivalent---formulation, based on a theory of {\em possibility measures}, which is simpler in many respects.  This new perspective also sheds light on the relationship between IMs and Fisher's fiducial inference, as well as on the construction of optimal IMs.  

\smallskip

\emph{Keywords and phrases:} dimension reduction; false confidence; random set; statistical inference; validity. 
\end{abstract}

\section{Introduction}
\label{S:intro}

Broadly speaking, statistics aims to quantify uncertainty about relevant unknowns based on observed data and, therefore, plays a fundamental role in science.  More precisely, the goal of statistical inference, as we see it, is to take inputs---including data, posited statistical model (if any), and prior information (if any)---and return output in the form of meaningful numerical degrees of belief for relevant hypotheses concerning the unknowns.  But despite its importance, there is still no consensus in the statistical community about how the inputs should be turned to output, what mathematical form the output should take, and what statistical properties they ought to satisfy.  Reading between the lines, these questions make up what \citet{efron.cd.discuss} called the ``most important unresolved problem in statistical inference.''  The {\em BFF} or {\em Bayes, Fiducial, and Frequentist} group \citep[e.g.,][]{xl.bff.2017} was created, circa 2014, largely to foster research efforts that could help resolve this unresolved problem.  While the depth and breadth of the group's contributions has been remarkable, the diversity of perspectives in this volume reveals that we are no closer to a resolution than we were in 2014.  In light of the replication crisis in science \citep[e.g.,][]{camerer2018}, the general confusion about and lack of trust in statistics \citep[e.g.][]{mcshane.etal.rss, pvalue.ban}, and new competition from applied mathematics, computer science, and engineering for resources and opportunities, resolving these problems is more important now than ever.   

%The reason is that, in general, degrees of belief are personal and, therefore, are not interpretable by others without some external qualifications.  

Given that a theory of statistical inference is intended to be a foundation on which methods will be developed for solving scientific problems, it is essential to consider the totality of applications in order to ensure a base level of reliability or replicability.  As \citet{reid.cox.2014} put it
\begin{quote}
{\em it is unacceptable if a procedure...of representing uncertain knowledge would, if used repeatedly, give systematically misleading conclusions.}
\end{quote}
Central to the development of our {\em inferential model} (IM) framework is the principle that belief assignments must be calibrated, or {\em valid} in the technical sense of Equation \eqref{eq:valid.bel} or \eqref{eq:valid.pl} below.  Our validity condition equips the belief assignments with a necessary external qualification that determines an objective scale for interpretation.  A startling realization was that validity, in the strong sense that we advocate, cannot be achieved by a framework whose belief assignments satisfy the mathematical properties of additive probabilities.  Therefore, if one strictly adheres to validity, then a departure from virtually all of the existing schools of thought is required, and the IM framework---first described in the sequence of papers \citet{imbasics, imcond, immarg} and then in our monograph \citep{imbook}---provides a guide into the world of statistical inference based on non-additive or imprecise probabilities.  

After recognizing the apparently fundamental role that imprecise probability plays in statistical inference, we spent the last year digging into the imprecise probability literature in hopes of better understanding this important new connection.  The present paper, the first of several resulting from these recent efforts, offers an alternative perspective on and construction of IMs.  An essential and distinguishing feature of the IM framework is its use of random sets to quantify uncertainty about the unobserved value of a certain auxiliary variable.  While some may find the use of random sets intuitively appealing, others surely will be bewildered.  Therefore, we offer here an alternative, more direct, and arguably simpler construction that bypasses consideration of random sets.  The key to this simplification is the fact that our recommended random sets are {\em nested}, a characteristic that provides valuable additional structure compared to random sets which are not nested.  Roughly speaking, nested random sets are equivalent to {\em possibility measures} \citep[e.g.,][]{dubois.prade.book, zadeh1978}, and the latter can be characterized by ordinary functions---what we call {\em possibility contours}---rather than non-additive set functions.  In addition to their relative simplicity, possibility measures have a surprisingly close and inherent connection with the validity property we seek in the context of statistical inference.  This stems from a beautiful characterization of a possibility measure's credal set in terms of probabilities assigned to possibility contour's level sets. Consequently, possibility measures are fundamental to statistical inference, so drawing this connection provides some important insights beyond the simplified IM construction.  

The remainder of this paper is organized as follows.  After setting up the notation, the need for non-additivity, and the basic IM construction in Section~\ref{S:background}, we introduce the necessary concepts and terminology from possibility theory in Section~\ref{SS:possibility}.  Then Section~\ref{SS:imposs} develops the new possibility measure-based construction, various implications are discussed in Section~\ref{SS:remarks}, and its equivalence to the original IM formulation is established in Section~\ref{SS:equivalence}. Two illustrative examples are presented in Section~\ref{S:examples} with an emphasis on both the possibility measure interpretation and the key dimension reduction steps originally presented in \citet{imcond, immarg}.  Finally, Section~\ref{S:discuss} gives some concluding remarks and poses some open questions.

\section{Background}
\label{S:background} 

\subsection{Probabilistic inference and false confidence}
\label{SS:fc}

Statisticians, engineers, and others choose to quantify their uncertainties in terms of probabilities for various reasons.  And centered around virtually every one of those individual reasons a framework for probabilistic inference has been developed.  So what makes the IM framework unique?  Like many, we were originally troubled by the Bayesians' need of a prior distribution and we started down that now well-worn path, but we eventually realized the problem went much deeper.  Specifically, all existing frameworks for probabilistic inference fail to provide certain guarantees that we believe are crucial to the interpretation of those probabilities.  We start with a brief description of this observation.  

To set the scene, let $Y$ denote the observable data, with statistical model $Y \sim \prob_{Y|\theta}$, where $\prob_{Y|\theta}$ is a probability distribution for $Y$, supported on (a sigma-algebra containing subsets of) the space $\YY$, depending on a parameter $\theta \in \Theta$.  Now let $Q_y$ be a data-dependent probability measure defined on $\Theta$---a Bayes or empirical Bayes posterior \citep{berger1985, ghosh-etal-book}, a fiducial or generalized fiducial distribution \citep{fisher1935a, hannig.review, zabell1992}, a confidence distribution \citep{schweder.hjort.book, xie.singh.2012}, or something else---based on which inferences will be drawn.  That is, the truthfulness of assertions $A \subseteq \Theta$ concerning the unknown $\theta$ will be assessed based on the magnitude of $Q_y(A)$.  Therefore, in the spirit of replicability, a minimal requirement is that $Q_y$ assigning high probability to a false assertion should be a rare event.  More precisely, if $\theta \not\in A$, then $Q_Y(A)$, as a function of $Y \sim \prob_{Y|\theta}$, should not tend to be large.  Since $Q_y$ does not return ``real probabilities'' \citep[][p.~249]{fraser.copss}, if the aforementioned requirement is not met, and inferences are systematically misleading, then the $Q_y$-probabilities are not meaningful in any sense.  \citet{balch.martin.ferson.2017} showed that every data-dependent probability measure is afflicted with {\em false confidence}, i.e., there always exists false assertions that tend to be assigned high probability; see Section~\ref{SS:eeiv} below for an example, and \citet{martin.nonadditive} for more discussion.  Of course, not all assertions are afflicted with false confidence, and those that are afflicted might not be ``practical.''   But since there are examples where practically relevant assertions are afflicted with false confidence, including the satellite collision application in \citet{balch.martin.ferson.2017}, and no assurance that risks of ``systematically misleading conclusions'' are under control, this must be taken seriously.

\subsection{Basic inferential models}
\label{SS:basics}

The only way to manage false confidence is to abandon the use of probability for uncertainty quantification in the context of statistical inference.  But if not probability, then what?  Our proposal in \citet{imbasics, imbook} was to construct a non-additive/imprecise probability, in particular, a special type of belief function \citep[e.g.,][]{shafer1976, dempster1967, dempster1968b, kohlas.monney.hints} derived from the distribution of a (nested) random set \citep{nguyen.book, molchanov2005}.  The proposed construction was broken down into three steps: {\em associate} (A), {\em predict} (P), and {\em combine} (C).  

\begin{astep}
Associate the observable data $Y \in \YY$ and unknown parameter $\theta \in \Theta$ to an unobservable auxiliary variable $U \in \UU$, with known distribution $U \sim \prob_U$, via the relation 
\begin{equation}
\label{eq:baseline}
a(Y,\theta,U) = 0, 
\end{equation}
where the mapping $a$ is known.  This basically boils down to an algorithm for simulating data $Y \sim \prob_{Y|\theta}$ on a computer.  
\end{astep}

\begin{pstep}
Predict the unobserved value of $U$, associated with $(Y,\theta)$ in \eqref{eq:baseline}, with a random set $\S \subseteq \UU$, with distribution $\prob_\S$; more details on this below. 
\end{pstep}

\begin{cstep}
Combine the observed data $Y=y$, the association \eqref{eq:baseline}, and the random set $\S \sim \prob_\S$ into a new data-dependent random set $\Theta_y(\S) \subseteq \Theta$, given by 
\[ \Theta_y(\S) = \bigcup_{u \in \S} \{\vartheta: a(y,\vartheta,u) = 0\}. \]
The inferential output is determined by the distribution of $\Theta_y(\S)$, as a function of $\S \sim \prob_\S$.  
\end{cstep}

There are a number of relevant summaries of the distribution of $\Theta_y(\S)$.  In particular, the {\em belief function} or lower probability is 
\[ \bel_y(A) = \prob_\S\{\Theta_y(\S) \subseteq A\}, \quad A \subseteq \Theta, \]
and the {\em plausibility function} or upper probability is 
\[ \pl_y(A) = \prob_\S\{\Theta_y(\S) \cap A \neq \varnothing\}, \quad A \subseteq \Theta. \]
It is not difficult to see that $\lPi_y$ and $\uPi_y$ are dual:
\begin{equation}
\label{eq:dual}
\uPi_y(A) = 1-\lPi_y(A^c), \quad A \subseteq \Theta. 
\end{equation}
A couple technical remarks are in order.  First, note that the belief and plausibility functions are non-additive, e.g., $\bel_y(A) + \bel_y(A^c) \leq 1$, from which it follows that $\bel_y(A) \leq \pl_y(A)$, hence the interpretation as lower and upper probabilities.  More details on the imprecise probability structure of the inferential output will be given in Section~\ref{S:new}.  Second, if there is positive $\prob_\S$-probability that $\Theta_y(\S)$ is empty, then the above are replaced by conditional probabilities given ``$\Theta_y(\S) \neq \varnothing$.''  Non-emptiness can fail when there are non-trivial constraints on the parameter space \citep{leafliu2012}, but it is most common when the auxiliary variable dimension can be reduced; see Section~\ref{S:examples}.  

While the user's choice of random set $\S \sim \prob_\S$ is quite flexible, a good choice is critical to the interpretation and properties of the IM output.  The key is to make a connection between the distribution of the random set $\S$ and that of the unobservable auxiliary variable $U$.  Towards this, define the random set's {\em hitting probability} as 
\[ \gamma(u) = \prob_\S(\S \ni u), \quad u \in \UU. \]
Clearly, this depends on $\prob_\S$, and we make a link to $\prob_U$ by requiring that 
\begin{equation}
\label{eq:valid.prs}
\text{$\gamma(U) \stgeq \unif(0,1)$ as a function of $U \sim \prob_U$.} 
\end{equation}
The relation $\stgeq$ is read ``stochastically no smaller than'' and, in this case, means that the distribution function of $\gamma(U)$ is on or below that of $\unif(0,1)$.  This condition is mild and easy to arrange and we discuss this further below.  
%; see \citet[][Corollary~1]{imbasics}.  

With this, it can be shown that the IM output is {\em valid} in the sense of the following proposition.  In words, validity guarantees that assigning large belief to false assertions---or small plausibility to true assertions---is a rare event.  Consequently, unlike any probability-based framework (see Section~\ref{SS:fc}), IMs are not afflicted by false confidence.  

\begin{prop}
\label{prop:valid.old}
If the random set $\S \sim \prob_\S$ satisfies \eqref{eq:valid.prs}, then the IM output is valid in the sense that 
\begin{equation}
\label{eq:valid.bel}
\sup_{\theta \not\in A} \prob_{Y|\theta}\{\bel_Y(A) \geq 1-\alpha\} \leq \alpha, \quad \text{for all $A \subseteq \Theta$ and all $\alpha \in [0,1]$}. 
\end{equation}
Since this holds for all $A$, the duality \eqref{eq:dual} implies the equivalent property:
\begin{equation}
\label{eq:valid.pl}
\sup_{\theta \in A} \prob_{Y|\theta}\{\pl_Y(A) \leq \alpha\} \leq \alpha, \quad \text{for all $A \subseteq \Theta$ and all $\alpha \in [0,1]$}.
\end{equation}
\end{prop}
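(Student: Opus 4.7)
The plan is to prove the belief-function form \eqref{eq:valid.bel} directly, and then derive the plausibility form \eqref{eq:valid.pl} from the duality \eqref{eq:dual}. The strategy for \eqref{eq:valid.bel} is to show that, when $(Y,U)$ are tied together by the association \eqref{eq:baseline} at the true $\theta$, the event $\{\bel_Y(A) \geq 1-\alpha\}$ is contained in $\{\gamma(U) \leq \alpha\}$ whenever $\theta \notin A$; then \eqref{eq:valid.prs} takes care of the rest.

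The critical structural observation, which is built into the C-step, is that the definition $\Theta_Y(\S) = \bigcup_{u \in \S}\{\vartheta : a(Y,\vartheta,u)=0\}$ guarantees $\theta \in \Theta_Y(\S)$ whenever $U \in \S$, because $a(Y,\theta,U)=0$ by the association. Contrapositively, $\theta \notin \Theta_Y(\S)$ forces $U \notin \S$. Now fix $\theta \notin A$. The inclusion $\Theta_Y(\S) \subseteq A$ then forces $\theta \notin \Theta_Y(\S)$, hence $U \notin \S$, so
\[
\bel_Y(A) = \prob_\S\{\Theta_Y(\S) \subseteq A\} \leq \prob_\S\{\S \not\ni U\} = 1 - \gamma(U),
\]
where on the right $U$ is held fixed (the unobserved value tied to $Y$) and the probability integrates $\S \sim \prob_\S$, which is precisely the definition of $\gamma$.

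Consequently $\{\bel_Y(A) \geq 1-\alpha\} \subseteq \{\gamma(U) \leq \alpha\}$. Because the A-step presents $Y \sim \prob_{Y|\theta}$ as a function of $(\theta, U)$ with $U \sim \prob_U$, probabilities of events defined in terms of $Y$ can be pulled back to events in $U$, giving
\[
\prob_{Y|\theta}\{\bel_Y(A) \geq 1-\alpha\} \leq \prob_U\{\gamma(U) \leq \alpha\} \leq \alpha,
\]
where the last inequality is exactly \eqref{eq:valid.prs}. Taking the supremum over $\theta \notin A$ proves \eqref{eq:valid.bel}. For \eqref{eq:valid.pl}, duality \eqref{eq:dual} gives $\pl_Y(A) \leq \alpha$ if and only if $\bel_Y(A^c) \geq 1-\alpha$, and $\theta \in A$ if and only if $\theta \notin A^c$; applying \eqref{eq:valid.bel} with $A$ replaced by $A^c$ yields \eqref{eq:valid.pl} immediately.

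There is no deep obstacle: the argument is a short chain of set inclusions followed by one appeal to \eqref{eq:valid.prs}. The only subtlety worth flagging is the coupling step, where the probability over the data-generating distribution $\prob_{Y|\theta}$ is replaced by a probability over $\prob_U$ via the association; this relies on reading \eqref{eq:baseline} as a simulation recipe producing $Y$ from $(\theta,U)$, so that the random variable $\gamma(U)$ is well-defined along the realized trajectory and its distribution under $\prob_U$ dominates $\unif(0,1)$.
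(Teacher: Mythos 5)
Your argument is correct and is exactly the intended one: the paper states Proposition~\ref{prop:valid.old} without proof (deferring to the original IM papers), but your chain---$\theta\notin A$ and $\Theta_Y(\S)\subseteq A$ force $\S\not\ni u_{Y,\theta}$, hence $\bel_Y(A)\le 1-\gamma(u_{Y,\theta})$, then pull back through the association and invoke \eqref{eq:valid.prs}, with \eqref{eq:valid.pl} following from duality---is the standard proof and mirrors the paper's own proof of the possibility-measure analogue, Proposition~\ref{prop:valid}. The only caveat, which the paper itself flags, is that when $\prob_\S\{\Theta_y(\S)=\varnothing\}>0$ the belief function is defined by conditioning on non-emptiness, and your bound would then need the extra step of handling that normalization; under the usual non-emptiness assumption your proof is complete.
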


The validity theorem establishes a meaningful interpretation for the IM belief and plausibility output.  Beyond that, it provides desirable frequentist guarantees for decision procedures derived from the same IM output.  

\begin{prop}
\label{eq:freq}
Let $(\bel_Y, \pl_Y)$ be valid IM output as described above.   
\begin{enumerate}
\item[{\em (a)}] Consider testing $H_0: \theta \in A$, for any $A \subseteq \Theta$.  Then, for any $\alpha \in (0,1)$, the test that rejects $H_0$ if and only if $\pl_Y(A) \leq \alpha$ has Type~I error probability bounded by $\alpha$.  
%\[ \sup_{\theta \in A} \prob_{Y|\theta}(\text{\em $T_Y$ rejects $H_0$}) \leq \alpha. \]
\vspace{-2mm}
\item[{\em (b)}] For any $\alpha \in (0,1)$, the $100(1-\alpha)$\% plausibility region 
%\begin{equation}
%\label{eq:plausibility.region}
$\{ \vartheta \in \Theta: \pl_y(\{\vartheta\}) > \alpha \}$ 
%\end{equation}
has coverage probability lower-bounded by $1-\alpha$.
%\[ \inf_\theta \prob_{Y|\theta}\{C_\alpha(Y) \ni \theta\} \geq 1-\alpha. \]
\end{enumerate}
\end{prop}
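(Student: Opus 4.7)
The plan is to derive both claims as essentially immediate corollaries of the plausibility-side validity statement \eqref{eq:valid.pl} from Proposition~\ref{prop:valid.old}. The only real work is to match the events up correctly and handle the supremum over $\theta$ appropriately; there is no new probabilistic content beyond what validity already provides.

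For part (a), I would start by fixing an arbitrary $\theta \in A$, so that the null hypothesis $H_0: \theta \in A$ is true at this parameter value. The Type~I error probability of the test that rejects when $\pl_Y(A) \leq \alpha$ is, by definition, $\prob_{Y|\theta}\{\pl_Y(A) \leq \alpha\}$. Applying \eqref{eq:valid.pl} to this same assertion $A$ and taking the supremum over $\theta \in A$ gives the uniform bound $\alpha$ on this probability, which is exactly the Type~I error control claim. The fact that the Type~I error is controlled \emph{uniformly} over the null follows for free from the sup in \eqref{eq:valid.pl}.

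For part (b), I would fix the true parameter value $\theta \in \Theta$ and specialize the validity statement \eqref{eq:valid.pl} to the singleton assertion $A = \{\theta\}$, which trivially satisfies $\theta \in A$. This yields
\[
\prob_{Y|\theta}\{\pl_Y(\{\theta\}) \leq \alpha\} \leq \alpha.
\]
The $100(1-\alpha)\%$ plausibility region $\{\vartheta : \pl_y(\{\vartheta\}) > \alpha\}$ contains the true $\theta$ precisely when $\pl_Y(\{\theta\}) > \alpha$, so its coverage probability equals $1 - \prob_{Y|\theta}\{\pl_Y(\{\theta\}) \leq \alpha\} \geq 1 - \alpha$, as claimed.

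There is essentially no obstacle: both parts are read off directly from \eqref{eq:valid.pl} by choosing $A$ appropriately (the null set for testing, a singleton for the confidence region). The only thing worth noting in the write-up is that both guarantees hold \emph{pointwise} in $\theta$ (and uniformly, thanks to the supremum), without any regularity conditions on the model, the association $a$, or the predictive random set $\S$ beyond the mild hitting-probability condition \eqref{eq:valid.prs} already assumed in Proposition~\ref{prop:valid.old}.
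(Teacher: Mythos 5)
Your proof is correct and is exactly the argument the paper intends: both parts are immediate corollaries of the plausibility-form validity property \eqref{eq:valid.pl}, obtained by taking $A$ to be the null hypothesis in (a) and the singleton $\{\theta\}$ in (b). The event identifications and the handling of the supremum are all accurate, so nothing further is needed.
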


There is no shortage of, e.g., tests that can control Type~I errors for a given class of hypotheses. But if the class of hypotheses changes, then the test procedure changes too.  The IM-based tests, on the other hand, provide Type~I error control for all hypotheses.  In addition to tests and confidence regions, valid predictions can also be achieved \citep{impred}.  Finally, aside from producing valid solutions for a wide range of statistical problems, characterization results \citep[e.g.,][]{impval, imchar, imconformal} show that, roughly, for any valid test or confidence region, there exists a valid IM whose corresponding test or confidence region is at least as efficient.

\section{New IM construction}
\label{S:new}

\subsection{Possibility measures}
\label{SS:possibility}

Towards a more direct IM construction, one that apparently skips the random set specification, we provide the necessary background on a special imprecise probability model, namely, {\em possibility measures}.  Key references include \citet{dubois.prade.book}, \citet{cooman.1997a}, \citet{dubois2006}, and the chapter by \citet{destercke.dubois.2014} in the introductory volume \citet{imprecise.prob.book}.  Possibility measures also have close connections to Shafer's consonant belief functions and to fuzzy sets \citep[e.g.,][]{zadeh1978}.  
%The logical foundations of possibility theory emerged in economics, see \citet{shackle1961}.  

A possibility measure on a space $\UU$ is determined by a function $\pi: \UU \to [0,1]$, which we call the {\em possibility contour}, that satisfies $\sup_u \pi(u) = 1$.  This defines a {\em possibility measure} $\upi$, a set function defined on the power set of $\UU$, according to the rule
\begin{equation}
\label{eq:supremum}
\upi(K) = \sup_{u \in K} \pi(u), \quad K \subseteq \UU, 
\end{equation}
where supremum over the empty set is defined to be 0.  Clearly, $\upi$ satisfies $\upi(\varnothing)=0$ and $\upi(\UU)=1$ by definition, just like a probability measure, but it is not additive.  Indeed, possibility measures are supremum-preserving \citep[e.g.,][]{cooman.aeyels.1999}, i.e., 
\[ \upi\bigl( {\textstyle\bigcup_\lambda K_\lambda} \bigr) = \sup_\lambda \upi(K_\lambda), \quad \text{for any collection $K_\lambda \subseteq \UU$}. \]
Restricting to countable collections and using the fact that sums of non-negative numbers are bigger than suprema, it follows that $\upi$ is subadditive.  A possibility measure $\upi$ has a dual $\lpi$, called a {\em necessity measure}, given by $\lpi(K) = 1-\upi(K^c)$, and it defines a coherent lower prevision \citep{walley1991,lower.previsions.book}.  

An interesting class of possibility measure examples---in fact, the only class relevant to use here, see \eqref{eq:phi.P} below---are those determined by a pair $(\prob,h)$, where $\prob$ is probability distribution and $h$ is real-valued function, both on the same space $\UU$.  Then 
\begin{equation}
\label{eq:pi.h}
\pi(u) = \prob\{h(U) < h(u)\}, \quad u \in \UU, 
\end{equation}
is a possibility contour, and the corresponding $\upi$ is defined via optimization in \eqref{eq:supremum}. 

Of all the imprecise probability models, possibility measures are among the most restrictive---in particular, necessity and possibility measures are special cases of Shafer's belief and plausibility functions.  Here ``restrictive'' is a positive quality because it implies a level of simplicity that more general models do not have.  This makes it possible to answer questions such as how close a possibility measure is to a probability measure, etc.  Towards this, like any other imprecise probability model, a possibility measure $\upi$ determines a {\em credal set}, or set of compatible probability measures, 
\begin{equation}
\label{eq:credal}
\C(\upi) = \{\prob \in \text{prob}(\UU): \text{$\prob(K) \leq \upi(K)$ for all measurable $K$}\}, 
\end{equation}
where $\text{prob}(\UU)$ denotes the set of all probability measures on (a specified sigma-algebra of subsets of) $\UU$.  There are interesting characterizations of the property ``$\prob \in \C(\upi)$'' in terms of the $\prob$-probability assigned to the so-called $\alpha$-cuts of $\upi$.  In particular, if $\pi$ is the possibility contour, then the $\alpha$-cut of $\upi$ is defined as 
\[ C_{\upi}^\alpha = \{u: \pi(u) \geq \alpha\}, \quad \alpha \in [0,1]. \]
These are simply the upper level sets of $\pi$.  Then \citet{dubois.etal.2004} and, in a more general context, \citet{cuoso.etal.2001} established that 
\begin{equation}
\label{eq:implication}
\prob \in \C(\upi) \iff \prob(C_{\upi}^\alpha) \geq 1-\alpha \quad \text{for all $\alpha \in [0,1]$}. 
\end{equation}
Questions about ``how imprecise'' a given possibility measure $\upi$ is can then be addressed by looking at how diverse is the set of $\prob$ that satisfy the condition on the right-hand side of \eqref{eq:implication}.  That is, if very different $\prob$ are such that $\prob(C_{\upi}^\alpha) \geq 1-\alpha$ for all $\alpha$, then the credal set $\C(\upi)$ is large and $\upi$ is rather imprecise.  

Conversely, if there is a specific probability measure $\prob$ of interest, then \eqref{eq:implication} can be used to answer questions about ``how precise'' a possibility measure can be and remain compatible with the given $\prob$.  This latter question is particularly relevant to us.  The idea is to first define a measure of a possibility measure's precision or {\em specificity}, and then solve the corresponding maximization problem.  This is the {\em maximum specificity principle} \citep[e.g.,][]{dubois.prade.1986}.  Intuitively, contours $\pi$ being pointwise smaller gives the possibility measure more specificity, so the goal is to make this contour function small while maintaining the $\prob$-probability assignments to the corresponding $\alpha$-cuts.  If $\prob$ has a unimodal density function $f$, then the goal is to let $\pi$ match the shape of $f$ as closely as possible.  \citet{dubois.etal.2004} showed that the maximum specificity possibility distribution compatible with $\prob$ has contour
\begin{equation}
\label{eq:phi.P}
\pi_\prob(u) = \prob\{f(U) < f(u)\}. 
\end{equation}
Note how simple the solution to this complex optimization problem is; see, also, \eqref{eq:pi.h}.  Computation of $\pi_\prob$ is also relatively straightforward: it can be done exactly for nice distributions $\prob$, numerically (e.g., via quadrature) for complex $\prob$ in low dimensions, and via Monte Carlo in higher dimensions.  The most important property is a curious consequence of the probability assignments to $\alpha$-cuts, namely, 
\begin{equation}
\label{eq:valid.pre}
\text{if $U \sim \prob$, then $\pi_\prob(U) \stgeq \unif(0,1)$}. 
\end{equation}
For continuous data problems, if there are no sets of positive $\prob$-measure where $f$ is constant, then the above stochastic inequality is an equality.  We say this property is ``curious'' because it suggests an inherent connection between possibility measures and the validity property \eqref{eq:valid.prs} described above.

\subsection{IMs from possibility measures}
\label{SS:imposs}

% barrier to validity, i.e., the possibility measure can't be ``too close'' to the auxiliary variable probability distribution  

For a given baseline association \eqref{eq:baseline}, let $\UU_y(\theta) = \{u: a(y,\theta,u)=0\}$ for a given $(y,\theta)$.  In continuous-data problems, this will be a singleton, which we denote as $u_{y,\theta}$.  
%Let $u_{y,\theta}$ denote the unique $u$-solution to the equation $a(y,\theta,u)=0$ in the baseline association in \eqref{eq:baseline}.  We will focus on continuous data applications to avoid non-uniqueness issues.  
We do not require that the association equation can be solved for $\theta$, as a function of $(y,u)$, as would be needed to derive, say, a fiducial or structural distribution.  However, being able to reduce the dimension so that a solution for $\theta$ can be found is important, for efficiency purposes.  Details about dimension reduction are discussed in \citet{imcond, immarg}, and we summarize this in the context of two examples in Section~\ref{S:examples}.   

Our key assumption here is that $\bigcup_\theta \UU_y(\theta) = \UU$ for all $y$.  In words, this means that there are no constraints on the possible $u$ values induced by any observation $y$. 
%Our key assumption here is that there are no constraints on the possible values that $u_{y,\vartheta}$ can take as a function of $(y,\vartheta)$.  In other words, we require the map $\vartheta \mapsto u_{y,\vartheta}$ to be {\em onto} for every $y$.  
This assumption might fail, for example, in problems that involve non-trivial constraints on the parameter space \citep{leafliu2012}.  In any case, if there are $u$ values that can be ruled out based on an observation $Y=y$, then that information would be known to the data analyst and should be used to sharpen his/her inference.  Here we are simply assuming that no such side information is available.  

Returning to the IM construction, start with the same baseline association as in \eqref{eq:baseline} and auxiliary variable distribution $\prob_U$, but now consider these new P- and C-steps.

\begin{pstep}
Model the post-data uncertainty about the unobserved value of $U$ by a possibility measure compatible with $\prob_U$.  Reasonable choices include those with contours like in \eqref{eq:pi.h}, in particular, the maximum specificity contour
\begin{equation}
\label{eq:pi.opt}
\pi(u) = \prob_U\{f(U) < f(u)\}, \quad u \in \UU, 
\end{equation}
with $f$ the density corresponding to $\prob_U$.  
\end{pstep}

\begin{cstep}
Combine the data $Y=y$, connection to $\theta$ and the auxiliary variable in association \eqref{eq:baseline}, with the above possibility measure to get a {\em posterior possibility contour}  
\[ \pi_y(\vartheta) = \sup_{u \in \UU_y(\vartheta)} \pi(u), \quad \vartheta \in \Theta. \]
%\[ \pi(u_{y,\vartheta}), \quad \vartheta \in \Theta, \]
%where $u_{y,\vartheta}$ is the $u$ value that satisfies $a(y,\vartheta,u)=0$ for the given $(y,\vartheta)$.  
\end{cstep}

From the posterior possibility contour $\pi_y$, we can extend to a posterior possibility measure defined on all subsets of $\Theta$ according to the optimization rule above:
\[ \upi_y(A) = \sup_{\vartheta \in A} \pi_y(\vartheta), \quad A \subseteq \Theta. \]
Define the corresponding posterior necessity measure as $\lpi_y(A) = 1-\upi_y(A^c)$.  This construction holds even for multivariate parameters, so the above formula can be used for marginal inference on any relevant feature $\phi = \phi(\theta)$ of the full parameter.  

There are some parallels between this and Fisher's fiducial inference; see Remark~\ref{re:fiducial} below.  But there are some obvious differences too.  One important difference is that a strong validity result like Proposition~\ref{prop:valid.old} above can easily be proved.  

\begin{prop}
\label{prop:valid}
The posterior possibility measure $\upi_Y$ is valid in the sense of \eqref{eq:valid.pl}, i.e.,  
\[ \sup_{\theta \in A} \prob_{Y|\theta}\{\upi_Y(A) \leq \alpha\} \leq \alpha, \quad \text{for all $A \subseteq \Theta$ and all $\alpha \in [0,1]$}. \]
Similarly, a claim analogous to \eqref{eq:valid.bel} holds for the posterior necessity measure $\lpi_Y$.   
\end{prop}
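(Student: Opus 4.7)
The plan is to piggyback on the stochastic dominance property \eqref{eq:valid.pre} satisfied by the auxiliary-space contour $\pi$. Concretely, I want to sandwich $\upi_Y(A)$ from below by $\pi(U)$, evaluated at the ``true'' auxiliary variable associated with the data, and then invoke \eqref{eq:valid.pre} to turn the stochastic bound on $\pi(U)$ into the desired bound on $\upi_Y(A)$. The C-step formula $\pi_y(\vartheta) = \sup_{u \in \UU_y(\vartheta)} \pi(u)$ is tailor-made for exactly this kind of pointwise inequality chain.

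First, I would fix $\theta \in A$ and view the data as generated through the baseline association: draw $U \sim \prob_U$ and obtain $Y$ so that $a(Y,\theta,U) = 0$, which is just to say $U \in \UU_Y(\theta)$ almost surely under the joint law of $(Y,U)$ given $\theta$. Taking the supremum of $\pi$ over $\UU_Y(\theta)$ only makes things larger, so
\[ \pi_Y(\theta) \;=\; \sup_{u \in \UU_Y(\theta)} \pi(u) \;\geq\; \pi(U), \quad \text{$\prob_{Y,U|\theta}$-a.s.} \]
Second, since $\theta \in A$, taking the supremum of $\pi_Y$ over $A$ further enlarges the quantity, yielding $\upi_Y(A) \geq \pi_Y(\theta) \geq \pi(U)$ almost surely.

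Third, I would translate this pointwise bound into a probability statement. The event $\{\upi_Y(A) \leq \alpha\}$ is contained in the event $\{\pi(U) \leq \alpha\}$, and marginalizing out $U$ against $\prob_U$ gives
\[ \prob_{Y|\theta}\{\upi_Y(A) \leq \alpha\} \;\leq\; \prob_U\{\pi(U) \leq \alpha\}. \]
The right-hand side is bounded by $\alpha$ by \eqref{eq:valid.pre}, since $\pi(U) \stgeq \unif(0,1)$ means its distribution function lies on or below that of $\unif(0,1)$. Taking the supremum over $\theta \in A$ preserves the bound, which is precisely the validity claim. The analogous statement for $\lpi_Y$ follows by duality $\lpi_Y(A) = 1 - \upi_Y(A^c)$ applied to the complementary assertion.

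I do not anticipate a real obstacle here: the only place to be careful is the measure-theoretic bookkeeping in the first step, namely that ``$U \in \UU_Y(\theta)$ a.s.'' is legitimate under the joint distribution induced by the association and the standing assumption $\bigcup_\theta \UU_y(\theta) = \UU$. Once that is granted, the proof is a three-line chain of inequalities, which nicely highlights why possibility measures are so naturally suited to the validity condition compared to general random-set constructions.
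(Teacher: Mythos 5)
Your proof is correct and follows essentially the same route as the paper's: bound $\upi_Y(A) \geq \pi_Y(\theta) \geq \pi(U)$ at the true $\theta$ and the associated auxiliary value, then invoke the stochastic dominance property \eqref{eq:valid.pre}. The only difference is cosmetic---you phrase the middle step as an almost-sure inequality under the joint law of $(Y,U)$, which also covers the case where $\UU_Y(\theta)$ is not a singleton, whereas the paper uses the distributional identity $\pi_Y(\theta) = \pi(u_{Y,\theta}) \stequ \pi(U)$.
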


\begin{proof}
First, we have $\upi_Y(A) \geq \pi_Y(\theta)$ by monotonicity.  Second, $\pi_Y(\theta)$ equals $\pi(u_{Y,\theta})$, which, as a function of $Y \sim \prob_{Y|\theta}$, has the same distribution as $\pi(U)$.  Finally, the latter random variable is stochastically no smaller than $\unif(0,1)$ by \eqref{eq:valid.pre}.
\end{proof}

The possibility contour based on the maximum specificity criterion is persuasive, but it is not the only option.  In fact, there are some cases where this is not an option at all.  For example, if $\prob_U = \unif(0,1)$, which often appears in applications, then the density is constant---not unimodal---so the above version of the maximum specificity construction cannot be used.  Fortunately, \citet{dubois.etal.2004} show that the most precise possibility measure compatible with a symmetric probability distribution having bounded support is the so-called {\em triangular} possibility measure.  For symmetric distributions on $[0,1]$, the triangular possibility contour is given by 
\begin{equation}
\label{eq:triangular}
\pi(u) = 1 - |2u-1|, \quad u \in [0,1]. 
\end{equation}
It is easy to check that, with $\prob_U = \unif(0,1)$, this contour satisfies the important distributional property \eqref{eq:valid.pre}.  Other choices as in \eqref{eq:pi.h} are possible, 
%That same distributional property can be achieved when the density $f$ in \eqref{eq:phi.P} is replaced by virtually any other function \citep[e.g.,][Corollary~1]{imbasics}, 
but the benefit of using \eqref{eq:pi.opt} or \eqref{eq:triangular} is that these are principled choices, motivated by optimality.  
%See, also, Remark~\ref{re:local} below.  

For a quick illustration, consider a single observation $Y$ from a Cauchy distribution with location parameter $\theta$.  Then the association is $Y - \theta - U = 0$, where $U \sim \prob_U$, with $\prob_U$ a standard Cauchy distribution, having density 
\[ f(u) \propto (1 + u^2)^{-1}, \quad u \in \RR. \]
An easy derivation shows that the ``optimal'' possibility measure has contour 
\[ \pi(u) = \prob_U\bigl\{ (1 + U^2)^{-1} < (1 + u^2)^{-1} \bigr\} = 2\{1 - F(|u|)\}, \]
where $F$ is the cumulative distribution function corresponding to $f$.  Since there is a unique solution $u_{y,\vartheta} = y-\vartheta$ for $u$, the posterior possibility contour $\pi_y$ is simple:
\[ \pi_y(\vartheta) = 2\{1 - F(|y-\vartheta|)\}. \]
The expression on the right-hand side is familiar to those who work with IMs; it is also related to the so-called {\em confidence curve} \citep{birnbaum1961} that often appears in the confidence distribution literature.  A plot of this function is displayed in Figure~\ref{fig:cauchy1}, for the case with $y=0$.  For example, the upper $\alpha$-level set, with $\alpha=0.05$, say, returns a 95\% confidence interval for $\theta$.  If one was interested in the hypothesis $A=(10,\infty)$, then it is clear from the contour's monotonicity that $\upi_y(A) = \pi_y(10)$ and $\lpi_y(A) = 0$.  
%The general Cauchy problem is more complicated---because dimension reduction techniques are required---and will be investigated in a later section.  

\begin{figure}[t]
\begin{center}
\scalebox{0.6}{\includegraphics{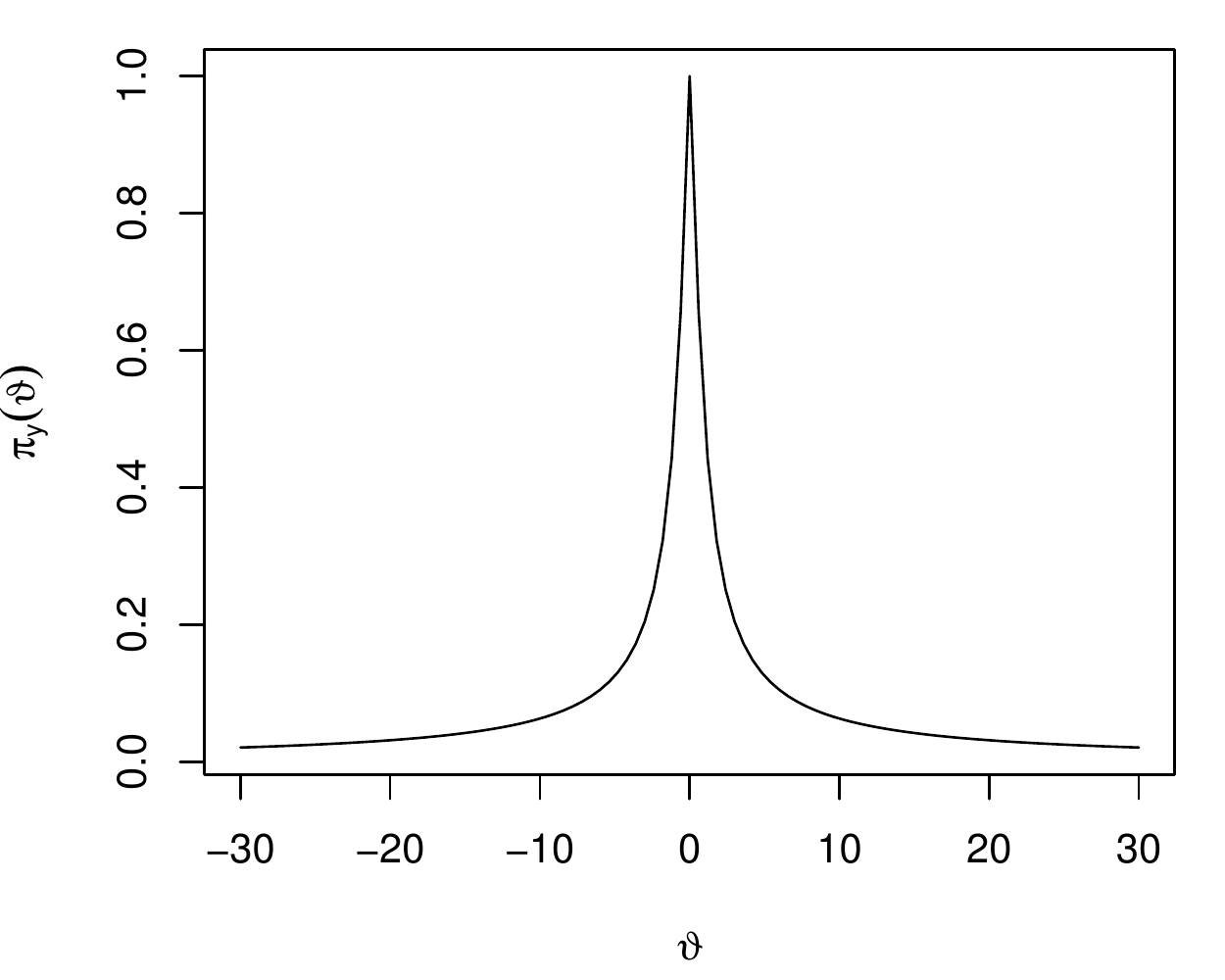}}
\end{center}
\caption{Possibility contour $\vartheta \mapsto \pi_y(\vartheta)$ for the Cauchy example with $y=0$.}
\label{fig:cauchy1}
\end{figure}

\subsection{Technical remarks}
\label{SS:remarks}

Fiducial inference, Dempster's extension, and IMs all share one thing in common: their use of the auxiliary variable with known distribution $\prob_U$.  The difference is that the former use $\prob_U$ directly to carry out inference while the latter adjusts $\prob_U$ either by introducing a random set (Section~\ref{SS:basics}) or a possibility measure (Section~\ref{SS:imposs}).  Here we make three remarks to compare and contrast fiducial and IMs when the latter are formulated via possibility measures.  Since the available IM optimality theory in \citet[][Sec.~4]{imbasics} relies on a connection to fiducial, the possibility measure formulation has some implications there too; see Remark~\ref{re:optimal}.  

\begin{remark}
\label{re:fiducial}
Various connections between this new IM construction and fiducial inference can be made.  Assume here that $(y,\theta,u)$ are all of the same dimension, so that we can solve the association equation for $u=u_{y,\theta}$ and $\theta=\theta_{y,u}$; see \citet{imcond} and Section~\ref{SS:curved} below for how this can be arranged.  Then the fiducial distribution for $\theta$, given $Y=y$, can be understood as the distribution for $\theta_{y,U}$, as a function of $U \sim \prob_U$, induced by probability calculus; this is the fiducial argument based on the {\em continue to regard} step in \citet{dempster1963} or the {\em switching principle} in \citet{hannig.review}.  What we described in Section~\ref{SS:imposs} is similar in spirit, but we replace ``$U \sim \prob_U$'' by a compatible possibility measure $\upi$ on $\UU$ and then propagate using possibility calculus.  An interesting question is if one could first obtain the fiducial distribution for $\theta$ using probability calculus---which, of course, is not valid---and then construct a compatible possibility measure from it to achieve validity.  Specifically, this latter approach would proceed by constructing the fiducial probability density function $\vartheta \mapsto f(u_{y,\vartheta}) J_y(\vartheta)$, where $J_y(\theta) = |\partial u_{y,\theta} / \partial\theta|$ is the Jacobian \citep[][Theorem~1]{hannig.review}, and then a compatible possibility measure with contour 
\[ \vartheta \mapsto \prob_U\{f(U) J_y(\theta_{y,U}) < f(u_{y,\vartheta}) J_y(\vartheta)\}. \]
The only way this can equal $\pi_y(\vartheta) = \prob_U\{f(U) < f(u_{y,\vartheta})\}$ is if the Jacobian term is constant, that is, if $u_{y,\vartheta}$ is linear in $\vartheta$ or, equivalently, if $\theta$ is a location parameter.  Therefore, in general, our proposal is {\em not} equivalent to first finding the fiducial distribution for $\theta$ and then constructing a compatible possibility measure.  
\end{remark}

\begin{remark}
\label{re:local}
Our strategy for finding $\upi$ is {\em global} in the sense that it focuses on a measure of compatibility that takes into consideration the probability/possibility assignments to all (measurable) subsets of $\UU$; see \eqref{eq:credal}.  Consider an alternative {\em local} strategy that focuses on a specified subclass $\mathscr{U}$ of subsets of $\UU$ and a weaker notion of compatibility between a probability $P$ and a possibility $\upi$, namely, 
\[ \text{$P(K) \leq \upi(K)$ for all $K \in \mathscr{U}$}. \]
We have in mind that $\mathscr{U}$ could be determined by a corresponding subclass of assertions $A \subseteq \Theta$ about $\theta$ deemed to be of special relevance, while all the other assertions dismissed as irrelevant.  In other words, this local strategy boils down to loosening the requirement that validity apply to {\em all assertions} about $\theta$, which, by the false confidence theorem, puts the reliability of inferences in jeopardy.  It turns out that the default-prior Bayes or fiducial solutions drop out as special cases of the above construction {\em only} under this risky local strategy.  To see this, consider a simple case where $(y,\theta,u)$ are all scalars, and let $\theta_{y,u}$ denote the $\theta$-solution to the association equation $a(y,\theta,u)=0$.  Suppose that the only assertions about $\theta$ of interest are half-lines in the class 
\[ \mathscr{A} = \{[\vartheta,\infty): \vartheta \in \RR\}. \]
If $u \mapsto \theta_{y,u}$ is monotone increasing, the above collection of half-lines on the parameter space corresponds to half-lines 
\[ \mathscr{U} = \{(-\infty,u]: u \in \RR\} \]
on the auxiliary variable space.  (If $u \mapsto \theta_{y,u}$ is monotone decreasing, then the orientation of the intervals in $\mathscr{U}$ reverses.) If the focus is exclusively on half-lines in the $u$-space, then the ``best'' compatible possibility measure is exactly the probability distribution, i.e., $\upi = \prob_U$.  With this choice, it is easy to see that the posterior possibility satisfies 
\[ \upi_y\bigl( [\vartheta, \infty) \bigr) = \prob_U(\theta_{y,U} \geq \vartheta). \]
The right-hand side is exactly Fisher's fiducial probability of the assertion ``$\theta \geq \vartheta$'' and, at least in transformation models, this would agree with the default-prior Bayesian posterior probability and Fraser's structural probability.  But the false confidence theorem applies to these probabilities, so the corresponding inference is not valid in the strong sense of Section~\ref{S:background}.  Thus, we recover the familiar result that fiducial inference is valid only in a limited sense, i.e., only for half-line assertions.  
%It is interesting to see that the traditional analyses are special cases of our approach if one drops the global requirement that inference be valid {\em for all assertions}; {\color{red}see Section~\ref{S:discuss}}.
\end{remark}

\begin{remark}
\label{re:optimal}
In the original IM framework, summarized in Section~\ref{SS:basics}, an important open question concerns {\em optimality}, i.e., what is the ``best'' random set $\S \sim \prob_\S$?  The specific optimality results presented in \citet{imbasics} are not fully satisfactory, but one of their preliminary results provides some valuable intuition.  Roughly, their Proposition~1 says that an IM cannot be more efficient than the primitive fiducial posterior from Remark~\ref{re:fiducial} above.  Of course, the latter is not valid, but the key take-away is that, for efficiency's sake, the IM's adjustment to achieve validity---via random sets or possibility measures---should be minimal.  Depending on the perspective, this boils down to choosing $\S$ as small as possible, in some sense, relative to the constraint \eqref{eq:valid.prs}, or choosing $\upi$ as close to $\prob_U$ as possible subject to the compatibility constraint.  An advantage of the new possibility measure-based approach is that the latter objective is apparently easier to formulate and solve mathematically, leading to $\upi$ defined by the contour \eqref{eq:pi.opt}.  Incidentally, in cases where $\prob_U$ has a unimodal density $f$, our conjecture of what the ``smallest possible $\S$ subject to \eqref{eq:valid.prs}'' would be is a random upper level set of $f$, i.e., 
\[ \S = \{u \in \UU: f(u) \geq f(\tilde U)\}, \quad \tilde U \sim \prob_U. \]
It is easy to check that this random set's hitting probability $u \mapsto \prob_\S(\S \ni u)$ is exactly the possibility contour in \eqref{eq:pi.opt}.  Therefore, the new possibility measure perspective provides some rigorous justification for our intuition-based random set recommendations.  Unfortunately, this possibility measure perspective does not provide a completely satisfactory answer to the question of optimality; see Section~\ref{S:discuss}.  
\end{remark}

%Several specific optimality results were presented in \citet{imbasics}, but these are not fully satisfactory because their notion of optimality depends on a specified assertion.  An extension to accommodate a finite collection of assertions simultaneously is presented in \citet{imreg.vs1}, but this closely depends on the special structure of their application.  

\subsection{Equivalence of the two constructions?}
\label{SS:equivalence}

The distinguishing feature is that the original construction, as discussed in Section~\ref{S:background}, makes use of random sets while the new construction uses possibility measures.  A natural question is if the two are equivalent, at least in some sense. 

The fundamental admissibility result in \citet[][Theorem~3]{imbasics} implies that the only random sets to be considered in the IM framework are closed and {\em nested}; here, $\S$ is nested if, for any pair $S$ and $S'$ in the support of $\prob_\S$, either $S \subseteq S'$ or $S \supseteq S'$.  Examples of closed nested random sets include those with form like in the displayed equation of Remark~\ref{re:optimal}.  It is well known that hitting probabilities for nested random sets are possibility contour functions \citep[e.g.,][]{cooman.aeyels.2000}, so it is no surprise that, if we start from a nested random set $\S$, then we arrive at a possibility measure.  What about the other direction?  Suppose we start, as in Section~\ref{SS:imposs}, with a possibility measure $\upi$ on $\UU$, with contour $\pi$.  Is there a corresponding closed and nested random set whose hitting probability agrees with $\pi$?  Questions about the connections between different models are important, so there is guidance in the imprecise probability literature.  Indeed, \citet[][Theorem~4.4]{miranda.etal.2004} show that, in all practically relevant settings (e.g., $\UU$ a Euclidean space), every possibility measure corresponds to a closed nested random set.  Therefore, in our setting, when we directly introduce a possibility measure $\upi$ on $\UU$, with contour $\pi$, we have implicitly defined a nested random set $\S \sim \prob_\S$ such that $\pi(u) = \prob_\S(\S \ni u)$ for all $u$.  From this random set $\S$, the corresponding IM from Section~\ref{SS:basics} would have a plausibility function which, for any $A \subseteq \Theta$, satisfies  
\begin{align*}
\pl_y(A) & = \prob_\S\{\Theta_y(\S) \cap A \neq \varnothing\} \\
& = \prob_\S\{\text{$\S \ni u_{y,\vartheta}$ for some $u \in \UU_y(\vartheta)$ and some $\vartheta \in A$}\} \\
& = \sup_{\vartheta \in A} \sup_{u \in \UU_y(\vartheta)} \prob_\S(\S \ni u) \\
& = \sup_{\vartheta \in A} \sup_{u \in \UU_y(\vartheta)} \pi(u),
\end{align*}
where the third equality follows from $\S$ being nested, and the fourth by the above connection between $\pi$ and $\S$. We immediately recognize the right-hand side as $\upi_y(A)$, the possibility measure in Section~\ref{SS:imposs}, which establishes the equivalence.

Altogether, there is no loss (or gain) of generality in working with possibility measures instead of nested random sets.  However, we believe this alternative possibility-based construction is valuable for several reasons.  First, the use of random sets is a distinguishing feature of the IM framework, what leads to the important validity property, but this is admittedly complicated and unfamiliar to practitioners, so a formulation based on ordinary functions might be more easily accessible.  Second, since possibility measures are among the simplest of the imprecise probability models, drawing a connection between these and IMs can lead to some new insights or results.  Third, as discussed in Remark~\ref{re:optimal}, the existing theory for possibility measures gives rigorous confirmation of our intuition about a default multipurpose random set that is optimal in a certain sense.  

Pushing this equivalence inquiry further leads to some interesting open questions.  If $\S$ is nested, then it is easy to verify that the corresponding random set $\Theta_y(\S)$ on the parameter space is nested too, so the corresponding hitting probability is a possibility contour.  But, as before, the opposite direction is more interesting.  That is, suppose we directly define the posterior possibility measure $\upi_y$, without starting with $\upi$ on $\UU$ and propagating to $\Theta$.  Then the results of \citet{miranda.etal.2004} imply the existence of a closed, nested, and data-dependent random set $\mathcal{T}_y \sim \prob_{\mathcal{T}_y}$ on $\Theta$ that satisfies 
\[ \pi_y(\vartheta) = \prob_{\mathcal{T}_y}(\mathcal{T}_y \ni \vartheta), \quad \vartheta \in \Theta. \]
Then the question is if there exists a nested random set $\S$ on $\UU$ such that $\Theta_y(\S) = \mathcal{T}_y$?  Based on how the map from $\S$ to $\Theta_y(\S)$ is defined, the only reasonable candidate $\S$ would be of the form 
\[ \S = \S_y = \bigcup_{\vartheta \in \mathcal{T}_y} \{u_{y,\vartheta}\}, \]
which, in general, depends on data $y$.  One can imagine certain cases where the dependence on $y$ would naturally drop out, for example, in transformation models, but are there others?  And how to interpret a data-dependent random set?  Fraser has discussed data-dependent priors in Bayesian analysis \citep[e.g.,][]{fraser.reid.marras.yi.2010, fraser.etal.2016, fraser2011} and there could be a connection.  There is also a question about if and, if so, then under what conditions, would a data-dependent random set lead to valid inference?  One situation where validity can be achieved with data-dependent random sets is in \citet{leafliu2012}; see, also, \citet{imexpert}.  There is also a yet-to-be-fully-understood connection between the parameter-dependent random sets in \citet{imcond}, \citet{imchar}, etc., and their data-dependent counterparts.

\section{Examples}
\label{S:examples}

As indicated above, steps to reduce the dimension of the auxiliary variable space are crucial to making the IM solution (both valid and) efficient.  These dimension-reduction techniques are described in detail in \citet{imcond, immarg}.  Rather than give a general survey, we have opted to describe these methods in the context of two examples.  These will also serve as  illustrations of the new possibility measure-based approach.  

\subsection{Curved normal}
\label{SS:curved}

Let $Y$ consist of $n$ observations, independent and identically distributed according to $\nm(\theta, \theta^2)$, with $\theta \in \Theta = \RR$.  Since $\theta$ controls both the mean and variance, this normal model is said to be {\em curved}.  This curvature makes the problem non-regular and generally more challenging than the version with separate mean and variance parameters.  

We can immediately reduce the problem down to one that involves minimal sufficient statistics.  With a slight abuse of notation, let $Y_1$ and $Y_2$ denote the mean and standard deviation, respectively, of the original $n$-sample, and write the association \eqref{eq:baseline} as 
\[ Y_1 = \theta + |\theta| U_1 \quad \text{and} \quad Y_2 = |\theta| U_2, \]
with $\prob_U$ the joint distribution of $U_1 \sim \nm(0,n^{-1})$ and $U_2 \sim \{(n-1)^{-1} \chisq(n-1)\}^{1/2}$, independent.  Since $\theta$ is a scalar while $U=(U_1,U_2)$ is two-dimensional, there is an opportunity to reduce dimension further.  

For the moment, assume that $\sign(\theta) \in \{-1,+1\}$ is {\em known}; this is not an unrealistic assumption, but we will discuss below how it can easily be removed.  To reduce the dimension, our strategy is to first find a feature of $U$ whose value is actually {\em observed}, and then condition on the value of that observed feature.  Aside from problems with a group transformation structure, where certain invariant statistics can be identified, there are very few general methods for finding observed features of $U$.  One suggested in \citet{imcond} is based on solving a partial differential equation.  That is, we seek a function $\eta$ such that $\eta(u_{y,\theta})$ is not sensitive to changes in $\theta$, in other words, such that $\partial\eta(u_{y,\theta})/\partial\theta = 0$.  By the chain rule, this amounts to finding $\eta$ that satisfies 
\[ \frac{\partial \eta(u)}{\partial u} \Bigr|_{u=u_{y,\theta}} \, \frac{\partial u_{y,\theta}}{\partial\theta} = 0. \]
In the present case, since $u=u_{y,\theta} = |\theta|^{-1} ( y_1-\theta, \, y_2 )^\top$, and 
\[ \frac{\partial u}{\partial \theta} = -\frac{\sign(\theta)}{|\theta|} \begin{pmatrix} u_1 + \sign(\theta) \\ u_2 \end{pmatrix}, \]
where $\sign(\theta)$ is assumed known, it is easy to check that 
\[ \eta(u) = u_2^{-1} \{\sign(\theta) + u_1\} \]
solves the differential equation.  Therefore, with known $\sign(\theta)$, the quantity $\eta(U)$ is observed, and its value in the sample is 
\[ \eta(u_{y,\theta}) = (y_2/|\theta|)^{-1} \{\sign(\theta) + (y_1-\theta)/|\theta|\} = y_1/y_2. \]
If we let $H(y)=y_1/y_2$, then we have a reduced association 
\[ Y_1 = \theta + Y_2 \tau(U) \quad \text{and} \quad \eta(U) = H(Y), \]
where $\tau(u) = u_1/u_2$ is a scalar.  Then the proposal is to build an IM based on the conditional distribution of $V=\tau(U)$, given $\eta(U) = H(y)$.  If we write $h$ for the observed value of $H(Y)$, then that conditional distribution for $V$ has density $f_h$ that satisfies 
\[ \log f_h(v) = \log|\tfrac{1}{h-v}|-\tfrac{n}{2}(\tfrac{v}{h-v})^2+(n-2)\log\tfrac{\sign(\theta)}{h-v}-\tfrac{n-1}{2}(\tfrac{1}{h-v})^2 + \text{constant}, \]
for all $v$ such that $\sign(\theta) / (h-v) > 0$.  From here, construct a (conditional) IM for $\theta$ by defining the $f_h$-dependent possibility contour 
\[ \pi_h(v) = \prob_{\tau(U)|\eta(U)=h}\{f_h(\tau(U)) < f_h(v)\}, \]
and then the corresponding posterior version:
\[ \pi_{y|h}(\vartheta) = \pi_h(v_{y,\vartheta}), \]
where $v_{y,\vartheta} = (y_1 - \theta)/y_2$.  It follows from Theorem~1 in \citet{imcond} that the corresponding (conditional) IM is valid.  

For illustration, suppose data of size $n=10$ are sampled from the curved normal distribution with $\theta=2$.  The sample mean and standard deviation are $y_1=1.86$ and $y_2=2.12$, respectively, so that $h=H(y)=0.88$.  A plot of the posterior possibility contour is shown in Figure~\ref{fig:curved}.  The horizontal cut---and corresponding vertical fences---at $\alpha=0.05$ determines a 95\% confidence interval for $\theta$ based on these data.  To assess the IM's validity, we do a small simulation study, where we repeat the above experiment 1000 times.  For each data set, we produce 95\% confidence intervals based on the above IM and also a (higher-order accurate) generalized fiducial interval.  For the IM intervals, the estimated coverage probability and mean length are 0.956 and 1.82, respectively; for the fiducial intervals, the estimated coverage probability and mean length are 0.932 and 1.45, respectively.  On the one hand, the fiducial intervals are a little shorter, but they tend to under-cover.  On the other hand, the IM intervals are guaranteed to achieve the nominal coverage thanks to validity.  

\begin{figure}[t]
\begin{center}
\scalebox{0.6}{\includegraphics{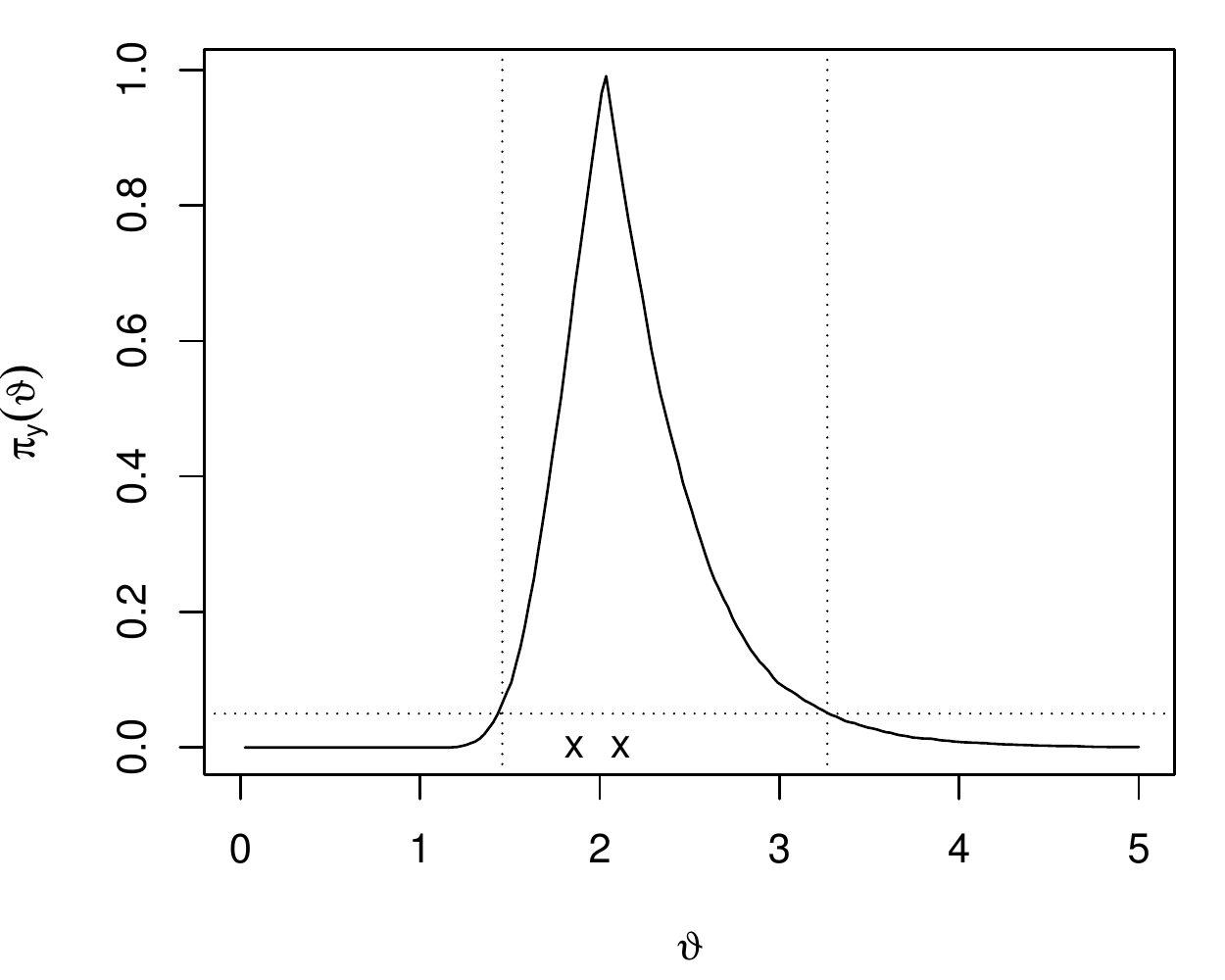}}
\end{center}
\caption{Possibility contour $\vartheta \mapsto \pi_{y|h}(\vartheta)$ for the curved normal example with $n=10$, $\theta=2$, and sample mean and standard deviation marked with {\sf X}.}
\label{fig:curved}
\end{figure}

As promised, it is possible to remove the assumption that $\sign(\theta)$ is known.  This requires a so-called {\em local} conditional IM in \citet{imcond}, allowing the differential equation's solution to depend on a local parameter, and then the local parameter-dependent IMs get suitably glued together.  Here the dependence on the local parameter is very weak---only depends on the sign---so this is relatively easy to manage.  But since assuming $\text{sign}(\theta)$ to be known is very mild, we opt for the simpler solution here.

\subsection{Exponential errors-in-variables}
\label{SS:eeiv}

Suppose we have a pair of independent observations $Y=(Y_1,Y_2)$, where 
\[ Y_i = \theta_i + U_i, \quad i=1,2. \]
The distribution $\prob_U$ of $U=(U_1,U_2)$ is known and here we consider the case where $U_i \sim \expo(\lambda_i)$, independent, but where the rate parameters, $\lambda_1$ and $\lambda_2$, need not be equal.  As an example, if there are $n_i$ independent and identically distributed exponentials shifted by $\theta_i$, then $Y_i$ represents the minimum of those and $\lambda_i = n_i$; this is what would emerge from the conditioning argument described in Section~\ref{SS:curved} above.  For joint inference on $\theta=(\theta_1,\theta_2)$, there is a density $f$ for $U$ and it is straightforward to follow the maximum specificity principle and construct a joint posterior possibility contour for $\theta$ as described above.  Similarly, for a location parameter problem like this one, a flat prior is the appropriate default choice, and one can immediately derive a Bayesian posterior distribution; the fiducial and structural distributions are the same.  

Things are less straightforward, however, if we add some additional structure.  Imagine that the support lower bound $\theta_1$ for $Y_1$ is believed to be a linear function of $\theta_2$, and the target is to estimate that function, but only a noisy measurement $Y_2$ of $\theta_2$ is available.  This boils down to a simple version of the {\em measurement error} problem, wherein the true ``covariate'' $\theta_2$ can only be measured with error.  From this point of view, it is natural to rewrite the problem as 
\[ Y_1 = \phi\xi + U_1 \quad \text{and} \quad Y_2 = \xi + U_2, \]
where $\phi > 0$ is the interest parameter and $\xi > 0$ is a nuisance parameter.  Then the goal is marginal inference on $\phi$ in the presence of unknown $\xi$ \citep[cf.][]{creasy1954, fieller1954}.  

Plugging the second equation in the above display into the first, we arrive at 
\[ Y_1 - \phi Y_2 = U_1 - \phi U_2 \quad \text{and} \quad Y_2 = \xi + U_2. \]
Notice that the first equation is free of $\xi$.  For such cases, the general theory in \citet{immarg} says that exact marginal inference can be achieved by simply ignoring the second $\xi$-dependent equation.  Not only does this eliminate the nuisance parameter, but it creates an opportunity for dimension reduction.  Since the distribution of $U_1 - \phi U_2$ is known, namely, {\em asymmetric Laplace}, we can employ a probability integral transformation.  That is, $U_1-\phi U_2$ has the same distribution as $G_\phi^{-1}(V)$, where $V \sim \unif(0,1)$ and $G_\phi = F_{\lambda_1,\phi \lambda_2}$ is the asymmetric Laplace distribution function parametrized as 
\[ F_{r_1,r_2}(x) = \begin{cases} 1 - \frac{r_2}{r_1 + r_2} e^{-r_1 x} & \text{if $x \geq 0$} \\ \frac{r_1}{r_1 + r_2} e^{r_2 x}, & \text{if $x < 0$}. \end{cases} \] 
If we use the triangular possibility contour \eqref{eq:triangular} for $V$, then the corresponding marginal posterior possibility contour for $\psi$ is 
\[ \pi_y(\varphi) = 1-\bigl| 2 G_\varphi(y_1 - \varphi y_2) - 1 \bigr|, \quad \varphi > 0. \]
It follows from Theorem~2 in \citet{immarg} that the corresponding IM is valid, which implies, for example, that confidence regions derived from this posterior possibility contour achieve the nominal coverage probability.  

For illustration, consider the case where $\lambda_1=\lambda_2=5$, $\theta_1=1$, and $\theta_2=0.1$; this implies $\xi=0.1$ and $\phi=10$.  Figure~\ref{fig:eeiv}(a) plots the posterior possibility contour.  Note that the peak of this plot is close to the true $\phi=10$, but that the curve does not vanish in the right tail.  This is because the data cannot rule out arbitrarily large values of $\phi$, hence a 95\% IM confidence interval would be unbounded.  This unboundedness is actually necessary to achieve the nominal coverage in this problem; see \citet{gleser.hwang.1987}.  To compare the IM with a Bayes solution, we can find the marginal posterior for $\phi = \theta_1/\theta_2$ based on a default flat prior for $(\theta_1,\theta_2)$, which we denote by $Q_y$.  Consider an assertion $A = \{\phi \leq 9\}$, which happens to be false in this example.  We compare the distribution of belief assignments to $A$ based on the Bayes and IM posteriors.  In particular, we compare the two distribution functions 
\[ \alpha \mapsto \prob_{Y|\theta}\{Q_Y(A) \leq \alpha\} \quad \text{and} \quad \alpha \mapsto \prob_{Y|\theta}\{\lpi_Y(A) \leq \alpha\}. \]
On the belief/lower probability/necessity scale, validity corresponds to this distribution function being on or above the diagonal line.  As expected, we find the IM curve above the diagonal line, while the Bayesian curve is well below.  Being so far below the diagonal line in this case is rather problematic because it means the Bayesian posterior probabilities assigned to the false assertion are {\em almost always large}.  In light of this false confidence, it is clear that the Bayesian inference is not reliable.   

\begin{figure}[t]
\begin{center}
\subfigure[Posterior possibility contour]{\scalebox{0.6}{\includegraphics{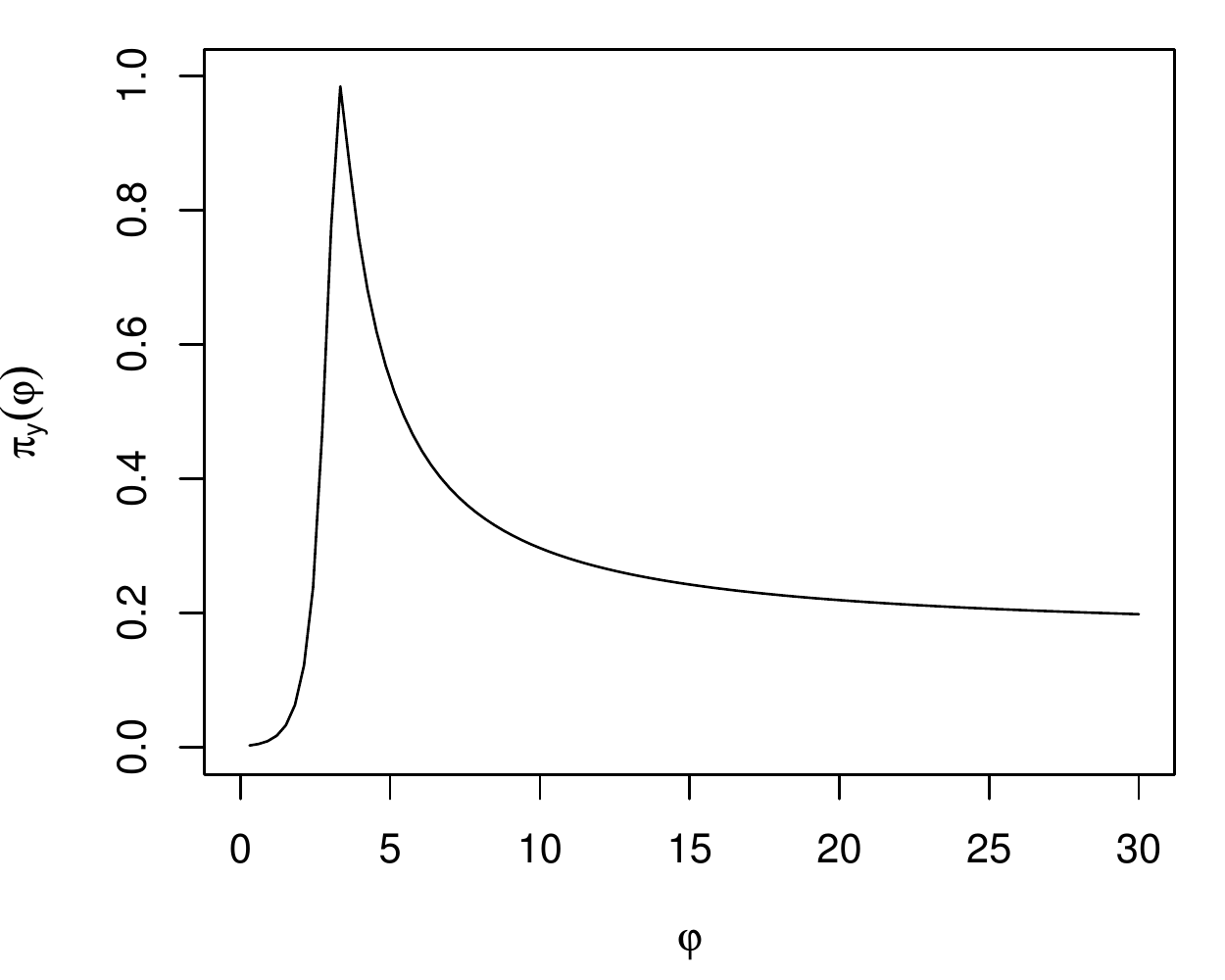}}}
\subfigure[Distribution function of $Q_Y(A)$ and $\lpi_Y(A)$]{\scalebox{0.6}{\includegraphics{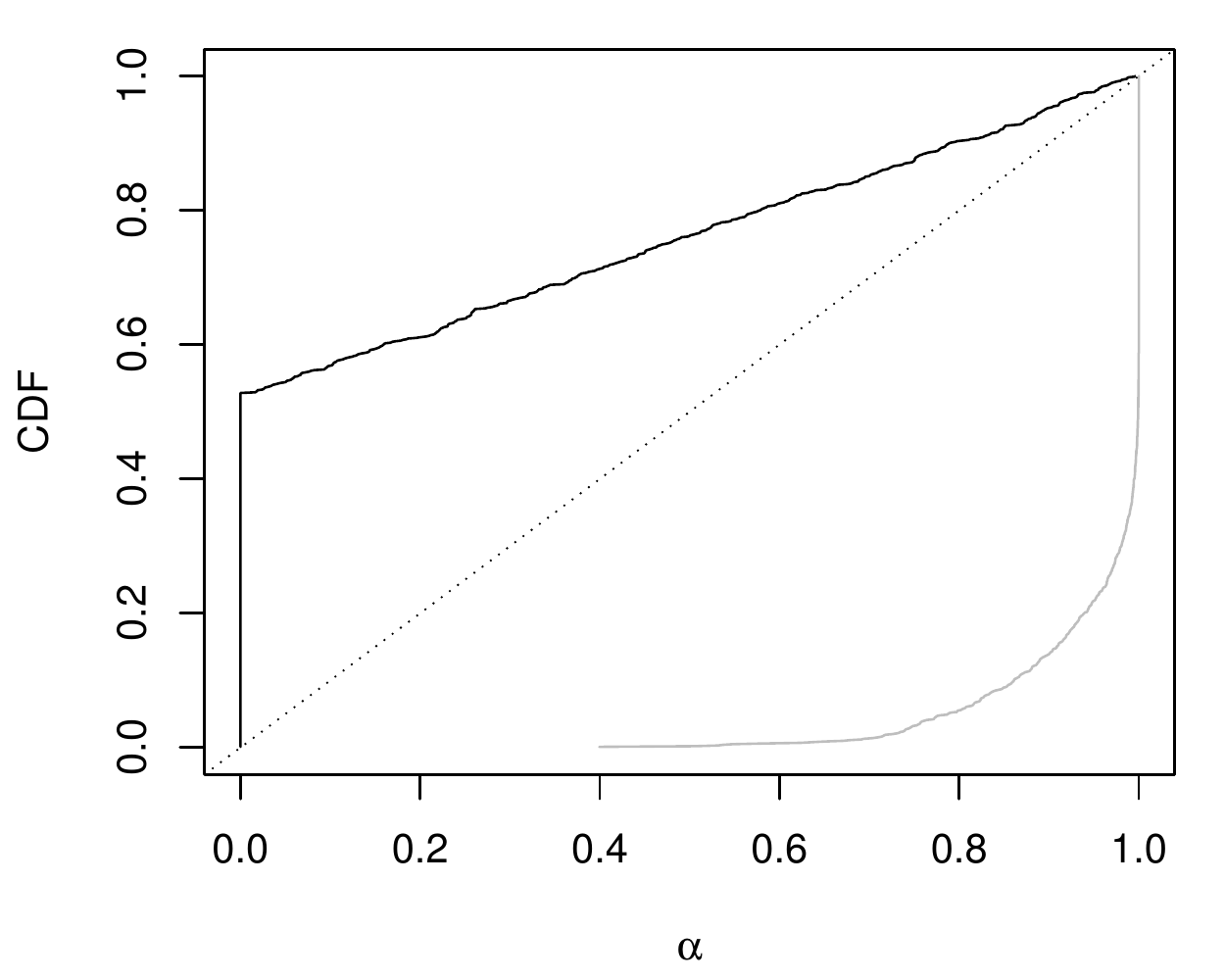}}}
\end{center}
\caption{Left: plot of the posterior possibility contour $\varphi \mapsto \pi_y(\varphi)$ in the exponential error-in-variables example, with true value $\phi=10$ and $(y_1,y_2) = (1.40,0.50)$.  Right: plots of the distribution function $\alpha \mapsto \prob_{Y|\theta}\{\star \leq \alpha\}$, for $\star$ equal to $Q_Y(A)$ (gray) and $\lpi_Y(A)$ (black), where $A = \{\phi \leq 9\}$ is a false assertion.}
\label{fig:eeiv}
\end{figure}

%The solid black curve in Figure~\ref{fig:eeiv}(b) is well above the diagonal line, which is a sign of inefficiency, a consequence of having to maintain validity at every assertion $A$.  There is a third (dashed) curve plotted there as well, and that corresponds to 

\section{Conclusion}
\label{S:discuss}

Here we have presented a new perspective on and construction of IMs based on possibility measures.  The chief advantage to this formulation is its relative simplicity, i.e., a user need not directly consider specification of and computation with a random set.  And this benefit comes without sacrificing on the essential validity property.  In fact, certain characterizations suggest that validity is somehow inherent in possibility measures, giving them a fundamental role in statistical inference.  Beyond validity, the existing possibility theory provides some guidance in terms of efficiency and optimality, as discussed in Remark~\ref{re:optimal}, and the dimension-reduction strategies for improved efficiency fit in seamlessly. There is certainly more work to be done, but we hope that the reader can see the IM framework's potential as a solution to Efron's ``most important unresolved problem.'' 

We conclude by mentioning a few ideas and open problems. 
\begin{itemize}
\item As mentioned in Remark~\ref{re:optimal}, the possibility measure connection does not completely resolve the optimality question.  A reason is that the maximum specificity based possibility measure apparently depends on the particular choice of $(a,U)$, association and auxiliary variable.  Intuitively, an ``optimal'' IM should depend only on the assumed statistical model, $Y \sim \prob_{Y|\theta}$.  Towards a better understanding of efficiency and optimality, it would be interesting to investigate the IM's sensitivity to different choices of $(a,U)$ when using the maximum specificity-based possibility measure.  
\vspace{-2mm}
\item When $\theta=(\phi, \xi)$ and $\xi$ is a nuisance parameter,in cases where $\xi$ cannot be completely eliminated, \citet{immarg} work with a $\xi$-dependent auxiliary variable and then suitably fatten the corresponding random set to achieve valid marginal inference on $\phi$.  In the context of the present paper, an alternative strategy for this would be to find the most precise $\upi$ that is compatible with {\em all} of the $\xi$-dependent auxiliary variable distributions.  Is the latter compatibility task easier or more efficient than the random set fattening?  
\vspace{-2mm}
\item In statistics, we often have data {\em and} some vaguely-specified structural assumptions, such as sparsity or smoothness.  These separate pieces of information are typically combined by introducing either a penalty or a Bayesian prior distribution.  But an important topic in imprecise probability is combining relevant information from different sources into an overall assessment of uncertainty.  If validity dictates that we operate in an imprecise probability domain, then we should investigate the properties of these alternative combination rules in our modern, high-dimensional statistical applications involving structural assumptions.  
\end{itemize}

%\section*{Acknowledgments}

%This work is partially supported by the U.S.~National Science Foundation, DMS--1811802.

%\pagebreak

\ifthenelse{1=1}{
\bibliographystyle{apalike}
\bibliography{/Users/rgmarti3/Dropbox/Research/mybib}
}{

}

\end{document}